\newtheorem{thm}{Theorem}
\newtheorem{prop}[thm]{Proposition}
\newtheorem{lem}[thm]{Lemma}
\newcommand{\ld}{\log\det}
\newcommand{\ub}{\underline{u}{}}
\newcommand{\ccnorm}[1]{\norm{#1}_{\cC^2(\bar{\fO})}}
\newcommand{\llnorm}[1]{\norm{#1}_{L^2(\fO)}}
\newcommand{\lnorm}[1]{\norm{#1}_{L^1(\fO)}}
\DeclareMathOperator{\dist}{dist}
\DeclareMathOperator{\re}{Re}
\begin{document}
\title{Energy Functionals for the Parabolic Monge-Amp\`{e}re Equation}

\author{Zuoliang Hou}
\address{Mathematics Department, Columbia University, New York, NY 10027}
\email{hou@math.columbia.edu}
\author{Qi Li}
\address{Mathematics Department, Columbia University, New York, NY 10027}
\email{liqi@math.columbia.edu}
\date{\today}
\maketitle

\section{Introduction}

Because of its close connection with the \KH-Ricci flow, the parabolic
complex \MA{} equation on complex manifolds has been studied by many
authors.  See, for instance, \cite{Cao1985, ChenTian2002,
PhongSturm2006}.
On the other hand, theories for complex \MA{} equation on both
bounded domains and complex manifolds were developed in \cite{BedfordTaylor1976,
Yau1978, CKNS1985, Kolodziej1998}. 
In this paper, we are going to study the parabolic complex
\MA{} equation over a bounded domain. 

\

Let $\fO \subset \CC^n$ be a bounded domain with smooth boundary $\di\fO$.
Denote $\cQ_T=\fO \times
(0,T)$ with $T>0$, $B=\fO \times \left\{ 0 \right\} $, $\fG=\di\fO \times
\left\{ 0 \right\}$ and $\fS_T=\di\fO \times (0, T)$. Let $\di_p
\cQ_T$ be the parabolic boundary of $\cQ_T$, i.e. $\di_p \cQ_T = B
\cup \fG \cup \fS_T$. Consider the following boundary value problem:
\begin{equation}
  \left\{
  \begin{aligned}
    & \dod{u}{t} - \ld\big(u_{\fa\bar{\fb}}\big) = f(t, z, u) 
    && \text{ in } \cQ_T, \\ 
    & u= \vff           & & \text{ on } \di_p \cQ_T. 
  \end{aligned}
  \right.
  \label{eq:1}
\end{equation}
where $f \in \cC^{\infty}(\RR \times \bar{\fO} \times \RR)$ and $\vff
\in \cC^{\infty}(\di_p\cQ_T)$.
We will always assume that
\begin{equation}
  \dod{f}{u} \leq 0.
  \label{eq:2}
\end{equation}
Then we will prove that 

\begin{thm}
  Suppose there exists a spatial plurisubharmonic (psh) function  $\ub \in \cC^2(\bar{\cQ}_T)$
  such that
  \begin{equation}
  \left.
  \begin{aligned}
    & {\ub\,}_t - \ld \big({\ub\,}_{\fa\bar{\fb}}\big) \leq f(t, z, \ub)
    \qquad \qquad \text{ in } \cQ_T, \\
    & \ub \leq \vff \quad \text{on }\; B  \qquad \text{and} \qquad 
    \ub = \vff \quad \text{on }\; \fS_T \cap \fG.  
  \end{aligned}
  \right.
  \label{eq:3}
  \end{equation}
  Then there exists a spatial psh solution $u \in
  \cC^{\infty}(\bar{\cQ}_T)$ of (\ref{eq:1}) with $u \geq \ub$ if
  following compatibility condition is satisfied:
  $\forall\,z \in\di\fO$,
  \begin{equation}
    \begin{split}
     \vff_t - \ld\big( \vff_{\fa\bar{\fb}} \big) &= f(0, z, \vff(z)),\\
     \vff_{tt}  - \big( \ld (\vff_{\fa\bar{\fb}}) \big)_t &=
    f_t(0,z,\vff(z)) + f_u(0,z,\vff(z))\vff_t.
      \end{split}
    \label{eq:4}
  \end{equation}
\end{thm}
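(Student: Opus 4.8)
The plan is to construct the solution by the method of continuity, realized as continuation in time: first establish short-time existence of a smooth, strictly spatial psh solution, then derive a priori estimates uniform up to time $T$, and finally use them to continue the solution to all of $[0,T]$. For the short-time step I would linearize the equation at the initial data $\vff(0,\cdot)$, which is strictly spatial psh---implicit already in (\ref{eq:4}), since $\ld(\vff_{\fa\bar{\fb}})$ must be defined---so that the equation is genuinely parabolic near $t=0$, and then apply the inverse function theorem in parabolic H\"older spaces. The compatibility conditions (\ref{eq:4}) enter here essentially: they are exactly the zeroth- and first-order compatibility relations at the corner $\fG$ that render the data admissible and produce a solution in $\cC^{2,\alpha}$ near $t=0$, which parabolic Schauder theory then upgrades to $\cC^{\infty}$.

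The heart of the matter is a priori estimates, which I would organize in the usual hierarchy and make uniform on $[0,T]$. The linearization
\begin{equation*}
  Lv = v_t - u^{\fa\bar{\fb}} v_{\fa\bar{\fb}} - f_u\,v,
\end{equation*}
with $(u^{\fa\bar{\fb}})$ the inverse of the positive-definite Hessian and $-f_u \ge 0$ by (\ref{eq:2}), governs both the comparison principle and the differentiated equations. The $\cC^{0}$ bound is immediate: comparison with the subsolution gives $u \ge \ub$, while spatial pshness forces the spatial maximum of $u(t,\cdot)$ onto $\di\fO$, where $u = \vff$, so $u \le \sup_{\di_p\cQ_T}\vff$. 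For $\cC^{1}$, the time derivative $w = u_t$ solves $Lw = f_t$, so the maximum principle bounds $w$ by its values on $\di_p\cQ_T$, which are determined by $\vff_t$ and the initial data; the spatial gradient is then controlled by an interior Bernstein argument together with barriers built from $\ub$ along $\fS_T$.

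For the $\cC^{2}$ bound the interior estimate exploits the concavity of $M \mapsto \ld M$: differentiating the equation twice and applying the maximum principle controls $\M u$ in the interior by its boundary values and lower-order data. Once $\M u$ is bounded above, the equation forces $\det(u_{\fa\bar{\fb}}) = \exp(u_t - f)$ to stay bounded below, so the Hessian is uniformly positive-definite, the equation is uniformly parabolic, and the concavity of $\ld$ lets the Evans--Krylov theorem upgrade the estimate to a uniform $\cC^{2,\alpha}$ bound, after which Schauder iteration bounds all higher derivatives. These uniform bounds keep $(u_{\fa\bar{\fb}})$ pinched between two positive multiples of the identity, so spatial pshness persists and the solution can be continued past any time at which it exists; hence it reaches all of $[0,T]$.

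The main obstacle is the second-order estimate on the lateral boundary $\fS_T$, and in particular the double normal derivative $u_{\nu\nu}$, exactly as in the elliptic complex \MA{} theory of \cite{CKNS1985}. The tangential-tangential derivatives are pinned down by the boundary data, and the mixed tangential-normal derivatives are handled by barrier functions in which the subsolution $\ub$ serves as a lower barrier; the double normal derivative additionally requires a quantitative positive lower bound for $\det(u_{\fa\bar{\fb}})$ on $\fS_T$, which must be extracted from the equation together with the estimates already in hand. The parabolic setting compounds this with the need to carry every estimate uniformly up to the corner $\fG$ and the initial slice $B$, where the compatibility conditions (\ref{eq:4}) and the hypotheses (\ref{eq:3}) on $\ub$ are precisely what prevent a loss of regularity.
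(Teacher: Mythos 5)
Your plan follows the paper's own route: reduce existence and smoothness to a priori $\cC^{2,1}$ estimates, then prove those in the hierarchy $\cC^0$, $u_t$, $\nabla u$, boundary $\nabla^2 u$, interior $\nabla^2 u$, with the subsolution $\ub$ feeding Guan-type barriers along $\fS_T$ and Evans--Krylov plus Schauder finishing the argument. The gap is at the step you yourself call the main obstacle, the double-normal derivative on $\fS_T$: the ingredient you propose there --- a quantitative positive lower bound for $\det(u_{\fa\bar{\fb}})$ on $\fS_T$ --- is both trivially available and insufficient. It is trivial because the equation gives $\det(u_{\fa\bar{\fb}}) = e^{u_t - f}$, so once $u_t$ is bounded the determinant is automatically pinched between two positive constants; no extraction is needed. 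It is insufficient because at a boundary point, expanding the determinant along the normal column,
\begin{equation*}
  \det\big(u_{\fa\bar{\fb}}\big) \;=\; u_{n\bar n}\,\det A \;+\; R,
  \qquad A=(u_{j\bar k})_{j,k<n},
\end{equation*}
where $R$ is controlled by the tangential and mixed second-derivative estimates already in hand. The upper bound on the determinant then gives $u_{n\bar n}\,\det A \le C$, and to conclude $u_{n\bar n}\le C'$ one needs $\det A \ge c>0$, i.e.\ a uniform positive lower bound on the smallest eigenvalue of the complex Hessian restricted to the complex tangent space of $\di\fO$. A two-sided bound on the full determinant does not prevent $\det A \to 0$ while $u_{n\bar n}\to\infty$; nothing in your sketch rules this out.

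This tangential eigenvalue bound is exactly what the paper's ``Claim'' ($\fl(p,t)\ge c_4>0$) supplies, via Trudinger's simplification: define $\fl(p,t)$ as the minimum of $u_{\xi\bar\xi}$ over unit complex tangent vectors $\xi$, take the point $(z_0,t_0)$ of the lateral boundary where $\fl$ attains its minimum, and run one more Guan-type barrier argument there (using $\ub$ and the auxiliary function $\Psi$) to bound $u_{x_n x_n}(z_0,t_0)$; at that single point the whole Hessian is bounded, so the easy lower bound $\det(u_{\fa\bar{\fb}})\ge c_0$ forces $\fl(z_0,t_0)\ge c_4>0$, and minimality propagates this bound to every point of $\fS_T$, after which the expansion above closes the double-normal estimate. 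You would need this argument (or the original, more involved one of \cite{CKNS1985}) to complete your boundary Hessian step. Two smaller points: your bound on $u_t$ via $L(u_t)=f_t$ and the parabolic maximum principle is fine for finite $T$ but degrades like $T\sup|f_t|$, whereas the paper's quotient $G=u_t(2M_0-u)^{-1}$ yields the $T$-uniform bound that is needed later for the convergence theorem; and on $\fS_T$ the gradient estimate needs an upper barrier in addition to the lower barrier $\ub$ --- the paper takes the spatial harmonic extension $h$ of the boundary data.
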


Motivated by the energy functionals in the study of the \KH-Ricci
flow, we introduce certain energy functionals to the complex \MA{}
equation over a bounded domain.  
Given $\vff \in \cC^{\infty}(\di\fO)$, denote
\begin{equation}
  \cP(\fO, \vff) = \left\{ u \in \cC^2(\bar{\fO}) \,\mid\, 
  u \text{ is psh, and } u = \vff \text{ on } \di\fO \right\},
  \label{eq:5}
\end{equation}
then define the $F^0$ functional by following variation formula:
\begin{equation}
  \fd F^0(u) = \int_{\fO} \fd u \det \big( u_{\fa\bar{\fb}} \big).
  \label{eq:6}
\end{equation}
We shall show that the $F^0$ functional is well-defined. 
Using this $F^0$ functional and following the ideas of
\cite{PhongSturm2006}, we prove that

\begin{thm}
  Assume that both $\vff$ and $f$ are independent of $t$, and 
  \begin{equation}
    f_{u} \leq 0  \qquad \text{and} \qquad f_{uu} \leq 0.
    \label{eq:7}
  \end{equation}
  Then the solution $u$ of (\ref{eq:1}) exists for $T = + \infty$, and as
  $t$ approaches $+\infty$, $u(\cdot,t)$ approaches the unique
  solution of the Dirichlet problem
  \begin{equation}
    \left\{
    \begin{aligned}
      & \det \big(v_{\fa\bar{\fb}}\big) = e^{-f(z, v)}
      && \text{ in } \cQ_T, \\ 
      & v= \vff           & & \text{ on } \di_p \cQ_T, 
    \end{aligned}
    \right.
    \label{eq:8}
  \end{equation}
  in $\cC^{1,\fa}(\bar{\fO})$ for any $0< \fa <1$.
  \label{thm:2}
\end{thm}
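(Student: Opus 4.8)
The plan is to combine uniform-in-$t$ a priori estimates with a monotone energy functional built from $F^0$, in the spirit of \cite{PhongSturm2006}. Since $f$ and $\vff$ are now independent of $t$, a spatial psh subsolution $\ub$ may be chosen time-independent, so Theorem~1 applies on $\cQ_T$ for every $T$ and, after checking that (\ref{eq:4}) holds for the static data, yields a global smooth solution $u$, i.e.\ existence for $T=+\infty$. The real substance is to bound $\ccnorm{u(\cdot,t)}$ uniformly in $t$. The comparison $u\geq\ub$ gives the lower $\cC^0$ bound, while $f_u\leq 0$ lets the maximum principle furnish a time-independent upper barrier. Differentiating (\ref{eq:1}) in $t$ and writing $v=u_t$ gives the linear parabolic equation $v_t=u^{\fa\bar\fb}v_{\fa\bar\fb}+f_u v$ (with $(u^{\fa\bar\fb})$ the inverse of $(u_{\fa\bar\fb})$); since $f_u\leq 0$, the maximum principle bounds $\sup|u_t|$ uniformly by its data, which is precisely the control needed below. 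The gradient and second-order estimates then follow from the parabolic theory underlying Theorem~1, with $f_{uu}\leq 0$ entering the second-order estimate, and uniformity in $t$ is inherited from the uniform $\cC^0$ and $u_t$ bounds; parabolic Evans--Krylov and Schauder estimates upgrade these to uniform $\cC^{2,\fa}(\bar\fO)$ bounds.

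Next I would introduce the monotone functional. Let $F(z,s)$ be the primitive with $\partial_s F(z,s)=e^{-f(z,s)}$, and set $\mathcal{E}(u)=F^0(u)-\int_\fO F(z,u)$. By (\ref{eq:6}),
\[
  \fd\mathcal{E}(u)=\int_\fO \fd u\,\big(\det(u_{\fa\bar\fb})-e^{-f(z,u)}\big),
\]
so the critical points of $\mathcal{E}$ on $\cP(\fO,\vff)$ are exactly the solutions of (\ref{eq:8}). Along the flow, using $\det(u_{\fa\bar\fb})=e^{u_t-f(z,u)}$ from (\ref{eq:1}),
\[
  \frac{d}{dt}\mathcal{E}(u(\cdot,t))=\int_\fO u_t\,\big(\det(u_{\fa\bar\fb})-e^{-f(z,u)}\big)=\int_\fO e^{-f(z,u)}\,u_t\,(e^{u_t}-1)\geq 0,
\]
since $s(e^s-1)\geq 0$ for all real $s$, with equality iff $u_t\equiv 0$. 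The uniform $\cC^2$ bounds keep $\mathcal{E}(u(\cdot,t))$ bounded, so this monotone bounded quantity converges and $\int_0^\infty \tfrac{d}{dt}\mathcal{E}\,dt<\infty$.

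From the uniform bound on $u_t$ one has $u_t(e^{u_t}-1)\geq c\,u_t^2$ for a constant $c>0$, so the integral bound forces $\llnorm{u_t(\cdot,t)}\to 0$ as $t\to\infty$ (time-spikes being ruled out by bounding $\tfrac{d}{dt}$ of the integrand via the uniform estimates). The uniform $\cC^{2,\fa}$ bounds give, after passing to a subsequence $t_k\to\infty$, convergence $u(\cdot,t_k)\to u_\infty$ in $\cC^2(\bar\fO)$; the limit satisfies $u_t\equiv 0$, hence $\ld\big((u_\infty)_{\fa\bar\fb}\big)+f(z,u_\infty)=0$ with $u_\infty=\vff$ on $\di\fO$, so $u_\infty$ solves (\ref{eq:8}).

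Finally, uniqueness for (\ref{eq:8}) follows from $f_u\leq 0$ by the comparison principle: at an interior maximum of $v_1-v_2$ the Hessian inequality forces $\det\big((v_1)_{\fa\bar\fb}\big)\leq\det\big((v_2)_{\fa\bar\fb}\big)$, i.e.\ $f(z,v_1)\geq f(z,v_2)$, contradicting $v_1>v_2$ unless $v_1\equiv v_2$. Since every subsequential limit equals this unique $v$, the whole family $u(\cdot,t)$ converges to $v$, and the uniform $\cC^2$ bounds together with the compact embedding $\cC^2(\bar\fO)\hookrightarrow\cC^{1,\fa}(\bar\fO)$ promote this to convergence in $\cC^{1,\fa}(\bar\fO)$ for every $0<\fa<1$. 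I expect the main obstacle to be the uniform-in-$t$ second-order estimate: the $F^0$ monotonicity alone yields only $u_t\to 0$ in $L^2$, and converting this into $\cC^{1,\fa}$ subsequential convergence requires $t$-independent $\cC^{2,\fa}$ bounds whose constants do not deteriorate as $t\to\infty$, which is exactly where the structural hypotheses $f_u\leq 0$ and $f_{uu}\leq 0$ must be exploited carefully.
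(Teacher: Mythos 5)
Your overall strategy coincides with the paper's: both follow Phong--Sturm, using the functional $F(u)=F^0(u)+\int_\fO G(z,u)\,dV$ with $G(z,s)=\int_0^s e^{-f(z,t)}\,dt$ (your $\mathcal{E}$ is $-F$ up to the sign convention in (\ref{eq:6}) versus (\ref{eq:41})), its monotonicity along the flow, boundedness via the uniform $\cC^2$ estimate, the resulting bound $\int_0^\infty \llnorm{\dot u}^2\,dt<\infty$, and finally subsequential convergence plus uniqueness of the elliptic problem. The genuine divergence --- and the one real gap --- is the step upgrading $\int_0^\infty\llnorm{\dot u}^2\,dt<\infty$ to $\llnorm{\dot u(\cdot,t)}\to 0$. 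You dispose of it with ``time-spikes being ruled out by bounding $\tfrac{d}{dt}$ of the integrand via the uniform estimates,'' but $\tfrac{d}{dt}\llnorm{\dot u}^2 = 2\int_\fO \dot u\,\ddot u$ requires a uniform bound on $\ddot u$, and none of the estimates you actually establish provides one: the $\cC^{2,1}$ estimate of Section~2 controls only one time derivative, and spatial $\cC^{2,\fa}$ bounds on $u$ do not control $u_{t\fa\bar{\fb}}$, hence not $\ddot u = u^{\fa\bar{\fb}}u_{t\fa\bar{\fb}}+f_u\dot u$. Making your route rigorous would need a further Schauder step applied to the linearized equation $\dot v - u^{\fa\bar{\fb}} v_{\fa\bar{\fb}} - f_u v=0$ satisfied by $v=\dot u$ (with coefficients uniformly H\"older up to $\di\fO$, itself requiring uniform parabolic Evans--Krylov up to the boundary); that is precisely the missing argument, and you never supply it.

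The paper fills this gap by a more elementary device, and this is also where your placement of the hypothesis $f_{uu}\le 0$ goes wrong: $f_{uu}$ plays no role in the second-order estimate (Section~2 uses only $f_u\le 0$). Instead, the paper differentiates (\ref{eq:50}) once more in $t$ to get $\dddot{u} - u^{\fa\bar{\fb}}\ddot{u}_{\fa\bar{\fb}} - f_u\ddot{u}\le f_{uu}\dot u^2\le 0$, so the maximum principle (using $f_u\le 0$, and $\ddot u=0$ on $\fS_T$ since $\vff$ is static) bounds $\ddot u$ \emph{from above} --- only an upper bound is needed. Then $Y(t)=\int_\fO \dot u^2\det(u_{\fa\bar{\fb}})\,dV$ satisfies, after an integration by parts, $\dot Y\le C_{10}Y$, hence $Y(t)\le Y(s)e^{C_{10}(t-s)}$ for $t>s$, which combined with $\int_0^\infty Y\,dt<\infty$ forces $Y(t)\to 0$. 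So either adopt this $Y$-argument (where $f_{uu}\le 0$ is genuinely used), or carry out the Schauder bootstrap you implicitly rely on; as written, the spike-exclusion step is asserted, not proved. A smaller point: your uniqueness argument for (\ref{eq:8}) by pointwise comparison at an interior maximum yields a contradiction only when $f_u<0$ strictly; with $f_u\le 0$ one should write the difference of the two equations, via the mean value theorem, as a linear elliptic equation $a^{\fa\bar{\fb}}w_{\fa\bar{\fb}}=b\,w$ with $b\ge 0$ and apply the maximum principle.
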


\noindent
\emph{Remark}: 
Similar energy functionals have been studied in \cite{Bakelman1983,
Tso1990, Wang1994, TrudingerWang1997, TrudingerWang1998} for the real
\MA{} equation and the real Hessian equation with homogeneous boundary
condition $\vff=0$, and the convergence for the solution of the real
Hessian equation was also proved in \cite{TrudingerWang1998}. Our
construction of the energy functionals and the proof of the convergence 
also work for these cases, and thus we also obtain an independent
proof of these results. 
Li~\cite{LiSY2004} and Blocki~\cite{Blocki2005} 
studied the Dirichlet problems for the complex $k$-Hessian equations
over bounded complex domains. Similar energy functional can also be
constructed for the parabolic complex $k$-Hessian equations and be
used for the proof of the convergence.

\section{A priori $\cC^2$ estimate}

By the work of Krylov~\cite{Krylov1983}, Evans~\cite{Evans1982}, Caffarelli
etc.~\cite{CKNS1985} and Guan~\cite{Guan1998}, it is well known that in
order to prove the existence
and smoothness of (\ref{eq:1}), we only need to establish the a priori
$\cC^{2,1}(\bar{\cQ}_T)$\footnote{$\cC^{m,n}(\cQ_T)$ means $m$ times
and $n$ times  differentiable in space direction and time
direction respectively, same for $\cC^{m,n}$-norm.} 
estimate, i.e.  for solution $u \in \cC^{4,1}(\bar{\cQ}_T)$ 
of (\ref{eq:1}) with
\begin{equation}
  u=\ub \quad \text{ on } \quad \fS_T\cup \fG  \qquad
  \text{and} \qquad 
  u \geq \ub \quad \text{ in } \quad \cQ_T,
  \label{eq:9}
\end{equation}
then
\begin{equation}
  \norm{u}_{\cC^{2,1}(\cQ_T)} \leq M_2,
  \label{eq:10}
\end{equation}
where  $M_2$ only depends on $\cQ_T, \ub, f $ and $\ccnorm{u(\cdot,0)}$.

\

\noindent
\emph{Proof of (\ref{eq:10})}. 
Since $u$ is spatial psh and $u \geq \ub$, so
\begin{equation*}
  \ub \leq u \leq \sup_{\fS_T} \ub 
\end{equation*}
i.e.
\begin{equation}
  \norm{u}_{\cC^0(\cQ_T)} \leq M_0.
  \label{eq:11}
\end{equation}

\

\noindent
\textbf{Step 1.} $|u_t| \leq  C_1$  in $\bar{\cQ}_T$.

Let $G=u_t (2M_0 - u)^{-1}$. If $G$ attains its minimum on
$\bar{\cQ}_T$ at the parabolic boundary, then $u_t  \geq -C_1$ where
$C_1$ depends on $M_0$ and ${\ub\,}_t$ on $\fS$. Otherwise, at the
point where $G$ attains the minimum, 
\begin{equation}
  \begin{split}
    G_t \leq 0 \quad &\text{i.e.} \quad u_{tt} + (2M_0 - u)^{-1} u_t^2
    \leq 0, \\
    G_{\fa} = 0 \quad &\text{i.e.} \quad u_{t\fa}+ (2M_0 - u)^{-1} u_t
    u_{\fa} = 0, \\
    G_{\bar{\fb}} = 0 \quad &\text{i.e.} \quad u_{t\bar{\fb}}+ 
    (2M_0 - u)^{-1} u_t u_{\bar{\fb}} = 0 ,
  \end{split}
  \label{eq:12}
\end{equation}
and the matrix $G_{\fa\bar{\fb}}$ is non-negative, i.e.
\begin{equation}
  u_{t\fa\bar{\fb}} + (2M_0 -u)^{-1} u_t u_{\fa\bar{\fb}} \geq 0.
  \label{eq:13}
\end{equation}
Hence
\begin{equation}
  0 \leq u^{\fa\bar{\fb}}\big( u_{t\fa\bar{\fb}} +
  (2M_0 -u)^{-1} u_t u_{\fa\bar{\fb}}  \big) = 
  u^{\fa\bar{\fb}} u_{t\fa\bar{\fb}} + n (2M_0 -u)^{-1} u_t,
  \label{eq:14}
\end{equation}
where $(u^{\fa\bar{\fb}})$ is the inverse matrix for
$(u_{\fa\bar{\fb}})$, i.e. 
\begin{equation*}
  u^{\fa\bar{\fb}} u_{\fg\bar{\fb}} = {\fd^{\fa}}_{\fg}. 
\end{equation*}
Differentiating (\ref{eq:1}) in $t$,  we get
\begin{equation}
  u_{tt} - u^{\fa\bar{\fb}} u_{t \fa\bar{\fb}} = f_t + f_u\, u_t,
  \label{eq:15}
\end{equation}
so
\begin{equation*}
  \begin{split}
    (2M_0 - u)^{-1} u_t^2 & \leq - u_{tt}  \\
    &= - u^{\fa\bar{\fb}}u_{t\fa\bar{\fb}} - f_t - f_u\, u_t\\
    &\leq n(2M_0 - u)^{-1} u_t - f_u\, u_t - f_t, 
  \end{split}
\end{equation*}
hence
\begin{equation*}
  u_t^2 - ( n - (2M_0 - u) f_u ) u_t + f_t (2M_0 - u) \leq 0 .
\end{equation*}
Therefore at point $p$, we get
\begin{equation}
  u_t \geq -C_1
  \label{eq:16}
\end{equation}
where $C_1$ depends on $M_0$ and $f$.

Similarly, by considering the function $u_t (2M_0 + u)^{-1}$ 
we can show that
\begin{equation}
  u_t \leq C_1.
  \label{eq:17}
\end{equation}

\

\noindent
\textbf{Step 2.} $|\nabla u|\leq M_1$

Extend $\ub|_{\fS}$ to a spatial harmonic function $h$, then
\begin{equation}
  \ub \leq u \leq h \quad \text{in} \quad \cQ_T \qquad \text{ and }
  \qquad \ub = u = h \quad \text{on} \quad \fS_T.
  \label{eq:18}
\end{equation}
So
\begin{equation}
  |\nabla u|_{\fS_T} \leq M_1.
  \label{eq:19}
\end{equation}

Let $L$ be the linear differential operator defined by
\begin{equation}
  L v = \dod{v}{t} - u^{\fa\bar{\fb}} v_{\fa\bar{\fb}} - f_u v.
  \label{eq:20}
\end{equation}
Then
\begin{equation}
  \begin{split}
    L(\nabla u + e^{\fl |z|^2}) &= L(\nabla u) + Le^{\fl |z|^2} \\
    & \leq \nabla f -  e^{\fl|z|^2}\big(\fl \sum u^{\fa\bar{\fa}} - f_u).
  \end{split}
  \label{eq:21}
\end{equation}
Noticed that and both $u$ and $\dot{u}$ are bounded and
\begin{equation*}
  \det \big( u_{\fa\bar{\fb}} \big) = e^{\dot{u} - f},
\end{equation*}
so
\begin{equation}
  0 < c_0 \leq \det\big( u_{\fa\bar{\fb}}\big) \leq c_1 ,
  \label{eq:22}
\end{equation}
where $c_0$ and $c_1$ depends on $M_0$ and $f$. Therefore
\begin{equation}
  \sum u^{\fa\bar{\fa}} \geq n c_1^{-1/n}.
  \label{eq:23}
\end{equation}
Hence after taking $\fl$ large enough, we can get
$$ L(\nabla u + e^{\fl |z|^2}) \leq 0, $$
thus
\begin{equation}
  |\nabla u| \leq \sup_{\di_p \cQ_T} |\nabla u| + C_2 \leq M_1.
  \label{eq:24}
\end{equation}

\

\noindent
\textbf{Step 3.} $|\nabla^2 u| \leq M_2$ on $\fS$.

At point $(p,t) \in \fS$, we choose coordinates $z_1, \cdots, z_n$
for $\fO$, 
such that at $z_1 = \cdots = z_n = 0 $ at $p$ and the positive $x_n$
axis is the interior normal direction of $\di \fO$ at $p$. We set
$ s_1 = y_1, s_2 = x_1, \cdots , s_{2n-1}=y_n, s_{2n}=x_n$ and 
$s'=(s_1, \cdots, s_{2n-1})$. We also assume that near $p$, 
$\di \fO$ is represented as a graph 
\begin{equation}
  x_n = \rho(s') = \frac{1}{2} \sum_{j,k < 2n } B_{jk}s_j s_k +
  O(|s'|^3).
  \label{eq:25}
\end{equation}
Since $(u-\ub)(s', \rho(s'), t)=0$, we have for $j, k < 2n$,
\begin{equation}
  (u-\ub)_{s_j s_k}(p, t) = - (u-\ub)_{x_n}(p, t) B_{jk},
  \label{eq:26}
\end{equation}
hence
\begin{equation}
  |u_{s_j s_k}(p,t)| \leq C_3,
  \label{eq:27}
\end{equation}
where $C_3$ depends on $\di\fO, \ub$ and $M_1$.

We will follow the construction of barrier function by
Guan~\cite{Guan1998} to estimate $|u_{x_n s_j}|$. For $\fd > 0$,
denote $\cQ_{\fd}(p,t) = \big(\fO \cap B_{\fd}(p)\big) \times (0,t)$.

\begin{lem} Define the function 
\begin{equation}
  d(z) = \dist(z, \di\fO)
  \label{eq:28}
\end{equation}
and
\begin{equation}
  v = (u-\ub) + a (h-\ub) - N d^2.
  \label{eq:29}
\end{equation}
Then for $N$ sufficiently large and $a, \fd$ sufficiently small,
\begin{equation}
  \left.
  \begin{aligned}
    L v &\geq \fe (1+ \sum u^{\fa\bar{\fa}}) && \text{ in } \cQ_{\fd}(p,t) \\
    v  &\geq 0  && \text{on } \di(B_{\fd}(p) \cap \fO) \times (0, t) \\
    v(z,0) & \geq c_3 |z| && \text{for } z \in B_{\fd}(p) \cap \fO  
  \end{aligned}
  \right.
  \label{eq:30}
\end{equation}
where $\fe$ depends on the uniform lower bound of he eigenvalues of
$\{{\ub\,}_{\fa\bar{\fb}}\}$.
\label{thm:3}
\end{lem}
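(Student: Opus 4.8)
The plan is to verify each of the three conditions in (\ref{eq:30}) by choosing the constants $N$, $a$, and $\fd$ in the right order. The key structural observation is that $v$ is built from three pieces with complementary roles: the term $a(h-\ub)$ is nonnegative and vanishes on $\fS_T$ (since $h=u=\ub$ there), the term $-Nd^2$ vanishes on $\di\fO$ but is strongly concave with a large negative Laplacian-type contribution, and $(u-\ub)\geq 0$ controls the sign on the bottom. First I would compute $Lv$ by applying $L$ to each summand. For the $-Nd^2$ term, since $L$ contains $-u^{\fa\bar{\fb}}(\cdot)_{\fa\bar{\fb}}$, differentiating $d^2$ twice produces a term proportional to $N\,u^{\fa\bar{\fb}}d_{\fa}d_{\bar{\fb}}$ plus $N\,d\cdot(\text{bounded})$; near $\di\fO$ the gradient $\nabla d$ is essentially the unit normal, so $u^{\fa\bar{\fb}}d_{\fa}d_{\bar{\fb}}$ is comparable to $\sum u^{\fa\bar{\fa}}$. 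This is the mechanism that generates the desired $\fe(1+\sum u^{\fa\bar{\fa}})$ on the right-hand side.

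The main technical input, which I expect to be the hard part, is controlling $L(u-\ub)$ from below. Here I would use the concavity of $\log\det$: writing $u^{\fa\bar{\fb}}(u-\ub)_{\fa\bar{\fb}} = u^{\fa\bar{\fb}}u_{\fa\bar{\fb}} - u^{\fa\bar{\fb}}{\ub\,}_{\fa\bar{\fb}} = n - u^{\fa\bar{\fb}}{\ub\,}_{\fa\bar{\fb}}$, and invoking the subsolution inequality (\ref{eq:3}) together with $f_u\leq 0$ from (\ref{eq:2}), one obtains a lower bound for $L(u-\ub)$ in terms of $u^{\fa\bar{\fb}}{\ub\,}_{\fa\bar{\fb}}$. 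By the arithmetic-geometric type inequality and the positivity $\{{\ub\,}_{\fa\bar{\fb}}\}>0$, the quantity $u^{\fa\bar{\fb}}{\ub\,}_{\fa\bar{\fb}}$ is bounded below by a positive multiple of $\sum u^{\fa\bar{\fa}}$, with the multiple controlled by the uniform lower bound on the eigenvalues of $\{{\ub\,}_{\fa\bar{\fb}}\}$. This is precisely the source of the dependence of $\fe$ stated in the lemma. The $a(h-\ub)$ term contributes $a\,L(h-\ub)$, which since $h$ is spatial harmonic and $\ub$ is bounded in $\cC^2$ is of size $O(a)(1+\sum u^{\fa\bar{\fa}})$, and will be absorbed once $a$ is chosen small.

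Combining these estimates, I would arrange the constants as follows. Fix $N$ large enough that the $N\,u^{\fa\bar{\fb}}d_{\fa}d_{\bar{\fb}}$ term dominates inside a thin collar $\cQ_{\fd}(p,t)$, giving $Lv\geq\fe(1+\sum u^{\fa\bar{\fa}})$; this forces $\fd$ small so that $\nabla d$ stays close to the normal and the lower-order $N\,d$ terms remain controlled. The boundary condition $v\geq 0$ on $\di(B_{\fd}(p)\cap\fO)\times(0,t)$ splits into the spherical part $\di B_{\fd}(p)\cap\fO$, where $d^2\leq\fd^2$ is small while $a(h-\ub)$ provides a strictly positive gap (after shrinking $\fd$ relative to $a$), and the flat part on $\di\fO$, where $d=0$ and $h=\ub$ so $v=u-\ub\geq 0$ by (\ref{eq:9}). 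The bottom estimate $v(z,0)\geq c_3|z|$ on $B_{\fd}(p)\cap\fO$ follows from the compatibility data on $B$: at $t=0$ the term $a(h-\ub)$ again gives the linear-in-distance lower bound while $-Nd^2$ is quadratically small. The delicate point throughout is that the choices are sequential and must not circle back—$N$ is chosen first using only the eigenvalue bound on $\ub$, then $a$ small depending on $N$, then $\fd$ small depending on both—so that no constant is required to be simultaneously large and small.
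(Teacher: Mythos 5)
The paper's own ``proof'' of this lemma is a one-line citation of Lemma~2.1 in \cite{Guan1998}, so your sketch must be measured against Guan's argument, and it has a genuine gap at the central step, the inequality $Lv \geq \fe(1+\sum u^{\fa\bar{\fa}})$. The gap is your claim that $u^{\fa\bar{\fb}}d_{\fa}d_{\bar{\fb}}$ is ``comparable to $\sum u^{\fa\bar{\fa}}$'' because $\nabla d$ is essentially the unit normal. This is false: $u^{\fa\bar{\fb}}d_{\fa}d_{\bar{\fb}}$ is the quadratic form $(u^{\fa\bar{\fb}})$ tested against the single vector $\di d$, so it is bounded below only by the smallest eigenvalue of $(u^{\fa\bar{\fb}})$, i.e.\ by $1/\lambda_{\max}(u_{\fa\bar{\fb}})$, whereas $\sum u^{\fa\bar{\fa}}$ is the full trace; if $(u_{\fa\bar{\fb}})$ has eigenvalues $(\varepsilon,\dots,\varepsilon,1)$ with $\nabla d$ along the last eigendirection, the first quantity is $O(1)$ while the trace is $(n-1)/\varepsilon$. (Your second paragraph correctly attributes the trace term to $u^{\fa\bar{\fb}}{\ub\,}_{\fa\bar{\fb}} \geq \lambda_0 \sum u^{\fa\bar{\fa}}$, contradicting your first and third paragraphs; that, not the $Nd^2$ term, is the only possible source of $\fe(1+\sum u^{\fa\bar{\fa}})$.) Once this is corrected, your computation of $Lv$ necessarily leaves a bounded negative remainder $-C$, coming from $(u-\ub)_t$, from $-u^{\fa\bar{\fb}}u_{\fa\bar{\fb}}=-n$, from $f_u d^2$, and from $2Nd\,u^{\fa\bar{\fb}}d_{\fa\bar{\fb}}$. (Note also that if you control $(u-\ub)_t$ via the subsolution inequality (\ref{eq:3}) and concavity, as your second paragraph suggests, the $u^{\fa\bar{\fb}}{\ub\,}_{\fa\bar{\fb}}$ terms cancel exactly and the trace term is lost; one must instead use the Step~1 bound $|u_t|\le C_1$.) Since $\sum u^{\fa\bar{\fa}}$ has only the fixed lower bound (\ref{eq:23}), the inequality $\lambda_0(1+\sum u^{\fa\bar{\fa}})-C \geq \fe(1+\sum u^{\fa\bar{\fa}})$ is unattainable when $C$ is large, and under your reading of the $N$-term no choice of $N$ helps. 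The missing idea --- the actual role of $-Nd^2$ in Guan's lemma --- is that for any $\epsilon>0$, by the arithmetic-geometric mean inequality, $\epsilon\sum u^{\fa\bar{\fa}} + N u^{\fa\bar{\fb}}d_{\fa}d_{\bar{\fb}} \geq c_n\big(\epsilon^{n-1}N/\det(u_{\fa\bar{\fb}})\big)^{1/n}$ wherever $|\nabla d|=1$, and the upper bound $\det(u_{\fa\bar{\fb}})\le c_1$ from (\ref{eq:22}) turns the right-hand side into a constant of order $N^{1/n}$, which absorbs $-C$ once $N$ is large. Your sketch never uses the upper bound on $\det(u_{\fa\bar{\fb}})$, which is a reliable sign the argument cannot close.

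There is a second, smaller gap in the boundary verifications. On the spherical part $\fO\cap\di B_{\fd}(p)$ you invoke ``a strictly positive gap'' from $a(h-\ub)$: there is none, since points of this set come arbitrarily close to $\di\fO$, where $h-\ub$ vanishes, no matter how small $\fd$ is. What is actually needed is the Hopf-type bound $h-\ub \geq c\,d$, valid because $h-\ub$ is superharmonic (strict plurisubharmonicity of $\ub$ enters again here), positive in $\fO$, and zero on $\di\fO$; with it, $a(h-\ub)\geq Nd^2$ holds as soon as $\fd \leq ac/N$, and this is what fixes the order of the constants ($a$ and $N$ first, then $\fd$ small compared to $a/N$). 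The same linear-in-$d$ bound is all your $t=0$ argument really produces: it gives $v(z,0)\geq c\,d(z)$, which is weaker than the stated $c_3|z|$ (note that $d(z)\ll|z|$ is possible for $z$ near $\di\fO$ but away from $p$), though a bound of this form is what the bottom face of (\ref{eq:32}) actually requires; your ``quadratically small'' remark does not address this discrepancy.
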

\begin{proof}
  See the proof of Lemma~2.1 in \cite{Guan1998}.
\end{proof}

For $j< 2n$, consider the operator
$$ T_j = \dod{\,}{s_j} + \rho_{s_j} \dod{}{x_n}. $$
Then
\begin{equation}
  \left.
  \begin{aligned}
    T_j(u-\ub)  &=0 & & \text{on } \big(\di\fO \cap B_{\fd}(p)\big)
    \times (0, t) \\
    |T_j(u-\ub)| & \leq M_1 & & \text{on } \big(\fO \cap \di B_{\fd}(p)\big)
    \times (0, t) \\
    |T_j(u-\ub)(z, 0) | & \leq C_4 |z| & & \text{for } z \in B_{\fd}(p)  
  \end{aligned}
  \right.
  \label{eq:31}
\end{equation}
So by Lemma~\ref{thm:3} we may choose $C_5$ independent of $u$, and
$A >> B >> 1 $ so that
\begin{equation}
  \left.
  \begin{aligned}
    L\big(Av + B|z|^2 - C_5 (u_{y_n} -{\ub\,}_{y_n})^2 \pm T_j(u-\ub)
    \big) &\geq 0  && \text{in } \cQ_{\fd}(p, t),\\
    Av + B|z|^2 - C_5 (u_{y_n} -{\ub\,}_{y_n})^2 \pm T_j(u-\ub)
     &\geq 0  && \text{on } \di_p \cQ_{\fd}(p, t).
  \end{aligned}
  \right.
  \label{eq:32}
\end{equation}
Hence by the comparison principle, 
\begin{equation*}
    Av + B|z|^2 - C_5 (u_{y_n} -{\ub\,}_{y_n})^2 \pm T_j(u-\ub)
     \geq 0 \qquad \text{in } \cQ_{\fd}(p, t),
\end{equation*}
and at $(p,t)$
\begin{equation}
  |u_{x_n y_j}| \leq M_2.
  \label{eq:33}
\end{equation}

To estimate $|u_{x_n x_n}|$, we will follow the simplification in
\cite{Trudinger1995}. For $(p,t) \in \fS$, define
$$ \fl(p,t) = \min \{ u_{\xi\bar{\xi}} \,\mid\, \text{ complex vector
} \xi \in T_{p} \di \fO, \text{ and } |\xi| = 1 \} $$

\noindent
\textbf{Claim} $ \fl(p,t) \geq c_4 > 0 $ where $c_4$ is independent of
$u$.

Let us assume that $\fl(p,t)$ attains the minimum at $(z_0, t_0)$ with
$\xi \in T_{z_o} \di\fO$. We may assume that
$$ \fl(z_0,t_0) < \frac{1}{2}\, {\ub\,}_{\xi\bar{\xi}}(z_0,t_0). $$
Take a unitary frame $e_1, \cdots, e_n$ around $z_0$, such that
$e_1(z_0) = \xi$, and $\re e_n = \fg $ is the interior normal of $\di\fO$
along  $\di \fO$. Let $r$ be the function which defines $\fO$, then 
\begin{equation*}
  (u-\ub\,)_{1\bar{1}}(z,t) = - r_{1\bar{1}}(z)  (u-\ub\,)_{\fg}(z,t)
  \qquad  z \in \di\fO
\end{equation*}
Since $u_{1\bar{1}}(z_0, t_0) < {\ub\,}_{1\bar{1}}(z_0,t_0) /2 $, so 
$$ - r_{1\bar{1}}(z_0)  (u-\ub\,)_{\fg}(z_0,t_0) \leq - \frac{1}{2}\,
    {\ub\,}_{1\bar{1}}(z_0,t_0).  $$
Hence
$$  r_{1\bar{1}}(z_0)  (u-\ub\,)_{\fg}(z_0,t) \geq  \frac{1}{2}\,
    {\ub\,}_{1\bar{1}}(z_0,t) \geq c_5 > 0 . $$
Since both $\nabla u$ and $\nabla \ub$ are bounded, we get
$$  r_{1\bar{1}}(z_0) \geq c_6 > 0, $$
and for $\fd$ sufficiently small ( depends on $r_{1\bar{1}}$ )  and
$ z \in B_{\fd}(z_0)\cap \fO $, 
$$ r_{1\bar{1}}(z) \geq \frac{c_6}{2}. $$
So by $u_{1\bar{1}}(z,t) \geq u_{1\bar{1}}(z_0,t_0) $, we get
$$ \ub\,_{1\bar{1}}(z,t) - r_{1\bar{1}}(z)(u-\ub\,)_{\fg}(z,t)  \geq
\ub\,_{1\bar{1}}(z_0,t_0) - r_{1\bar{1}}(z_0)(u-\ub\,)_{\fg}(z_0,t_0). $$
Hence if we let 
$$ \Psi(z,t) = \frac{1}{r_{1\bar{1}}(z)} 
\big( r_{1\bar{1}}(z_0)(u-\ub\,)_{\fg}(z_0,t_0) +
\ub\,_{1\bar{1}}(z,t) - \ub\,_{1\bar{1}}(z_0,t_0)\big)$$
then
\begin{equation*}
  \left.
  \begin{aligned}
    (u-\ub\,)_{\fg}(z,t) &\leq \Psi(z,t) && \text{ on } 
    \big(\di\fO \cap B_{\fd}(z_0)\big)\times (0, T)\\
    (u-\ub\,)_{\fg}(z_0, t_0) &= \Psi(z_0, t_0).
  \end{aligned}
  \right.
\end{equation*}

Now take the coordinate system $z_1,\cdots, z_n$ as before. Then 
\begin{equation}
  \left.
  \begin{aligned}
    (u-\ub\,)_{x_n}(z,t) &\leq \frac{1}{\fg_n(z)} \Psi(z,t) && \text{ on } 
    \big(\di\fO \cap B_{\fd}(z_0)\big)\times (0, T)\\
    (u-\ub\,)_{x_n}(z_0, t_0) &= \frac{1}{\fg_n(z_0)}\Psi(z_0, t_0).
  \end{aligned}
  \right.
  \label{eq:34}
\end{equation}
where $\fg_n$ depends on $\di\fO$. After taking $C_6$ independent of
$u$ and $A >> B >> 1 $, we get
\begin{equation*}
  \left.
  \begin{aligned}
    L\big(Av + B|z|^2 - C_6 (u_{y_n} -{\ub\,}_{y_n})^2 
     + \frac{\Psi(z,t)}{\fg_n(z)} - T_j(u-\ub)
    \big) &\geq 0  && \text{in } \cQ_{\fd}(p, t),\\
    Av + B|z|^2 - C_6 (u_{y_n} -{\ub\,}_{y_n})^2 
     + \frac{\Psi(z,t)}{\fg_n(z)} - T_j(u-\ub)
     &\geq 0  && \text{on } \di_p \cQ_{\fd}(p, t).
  \end{aligned}
  \right.
\end{equation*}
So
$$    Av + B|z|^2 - C_6 (u_{y_n} -{\ub\,}_{y_n})^2 
     + \frac{\Psi(z,t)}{\fg_n(z)} - T_j(u-\ub)
     \geq 0 \qquad \text{in }  \cQ_{\fd}(p, t), $$
and
$$  |u_{x_n x_n}(z_0,t_0)| \leq C_7. $$
Therefore at $(z_0, t_0)$, ${u_{\fa\bar{\fb}}}$ is uniformly bounded,
hence 
$$  u_{1\bar{1}}(z_0, t_0) \geq c_4 $$
with $c_4$ independent of $u$. Finally, from the equation 
$$ \det u_{\fa \bar{\fb}} = e^{\dot{u} - f} $$
we get 
$$ |u_{x_n x_n}| \leq M_2. $$

\

\noindent
\textbf{Step 4.} $|\nabla^2 u| \leq M_2$ in $\cQ$.

By the concavity of $\ld$, we have
\begin{equation*}
    L(\nabla^2 u + e^{\fl |z|^2}) \leq O(1) - e^{\fl|z|^2}\big( \fl \sum
    u^{\fa\bar{\fa}} - f_u \big)
\end{equation*}
So for $\fl$ large enough, 
$$ L(\nabla^2 u + e^{\fl |z|^2}) \leq 0, $$
and
\begin{equation}
  \sup |\nabla^2 u| \leq \sup_{\di_p \cQ_T} |\nabla^2 u| + C_8
  \label{eq:35}
\end{equation}
with $C_8$ depends on $M_0$, $\fO$ and $f$. 

\qed 

\section{The Functionals $I, J$ and $F^0$}

Let us recall the definition of $\cP(\fO, \vff)$ in (\ref{eq:5}),
$$ \cP(\fO, \vff) = \left\{ u \in \cC^2(\bar{\fO} \,\mid\, 
  u \text{ is psh, and } u = \vff \text{ on } \di\fO \right\}. $$
Fixing $v \in \cP$, for $u\in \cP$, define
\begin{equation}
  I_v(u) = - \int_{\fO}(u-v) (\ii \ddb u)^n. 
  \label{eq:36}
\end{equation}

\begin{prop}
  There is a unique and well defined functional $J_v$ on
  $\cP(\fO,\vff)$, such that
  \begin{equation}
    \fd J_v(u) = - \int_{\fO} \fd u \big( (\ii \ddb u)^n  - (\ii\ddb
    v)^n \big),
    \label{eq:37}
  \end{equation}
  and $J_v(v)=0$.
  \label{thm:4}
\end{prop}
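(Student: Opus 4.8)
The plan is to read the right-hand side of \eqref{eq:37} as a closed $1$-form on $\cP(\fO,\vff)$ and to recover $J_v$ by integrating it. First I would record two structural facts. The class $\cP(\fO,\vff)$ is convex: if $u_0,u_1\in\cP$ then $u_s=(1-s)u_0+su_1$ satisfies $\ii\ddb u_s=(1-s)\ii\ddb u_0+s\,\ii\ddb u_1\ge 0$ and $u_s=\vff$ on $\di\fO$, so $u_s\in\cP$; hence $\cP$ is contractible and, in particular, connected. Moreover, since every competitor equals $\vff$ on $\di\fO$, the admissible variations $\fd u$ are exactly the functions vanishing on $\di\fO$, and this is what will make all boundary terms below disappear. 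Writing $\alpha_u(\fd u)=-\int_\fO \fd u\,\big((\ii\ddb u)^n-(\ii\ddb v)^n\big)$ for the prescribed $1$-form, I would \emph{define} $J_v(u)=\int_0^1 \alpha_{u_t}(\dot u_t)\,dt$ along a path $u_t$ from $v$ to $u$ in $\cP$, so that $J_v(v)=0$ is forced by the lower endpoint; uniqueness is then automatic, since any two functionals with differential $\alpha$ differ by a constant on the connected set $\cP$, and that constant is pinned down by the normalization.

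The heart of the matter is to show that $\alpha$ is closed, so that $J_v$ is path-independent and genuinely satisfies \eqref{eq:37}. Regarding two directions $\phi,\psi$ (functions vanishing on $\di\fO$) as constant vector fields, closedness is the symmetry of the second variation $D_\psi\big(\alpha_u(\phi)\big)=D_\phi\big(\alpha_u(\psi)\big)$. Using the first-variation identity $\fd\big((\ii\ddb u)^n\big)=n\,\ii\ddb(\fd u)\wedge(\ii\ddb u)^{n-1}$ (the term $(\ii\ddb v)^n$ is constant in $u$ and drops out), this reduces to
\[ \int_\fO \phi\,\ii\ddb\psi\wedge(\ii\ddb u)^{n-1}=\int_\fO \psi\,\ii\ddb\phi\wedge(\ii\ddb u)^{n-1}. \]
I would prove this by the integration-by-parts lemma: for $\phi,\psi\in\cC^2(\bar{\fO})$ vanishing on $\di\fO$ and any $d$-closed $(n-1,n-1)$-form $\Theta$,
\[ \int_\fO \phi\,\ii\ddb\psi\wedge\Theta=\int_\fO \psi\,\ii\ddb\phi\wedge\Theta, \]
applied with $\Theta=(\ii\ddb u)^{n-1}$, which is $d$-closed because $\ii\ddb u$ is. Concretely, applying Stokes' theorem to $d\big(\phi\,\ii\bar\partial\psi\wedge\Theta\big)$ and to $d\big(\psi\,\ii\partial\phi\wedge\Theta\big)$, the terms of wrong bidegree vanish by type reasons, the terms containing $\partial\Theta$ or $\bar\partial\Theta$ vanish because $\Theta$ is closed, and the boundary integrals over $\di\fO$ vanish because $\phi=\psi=0$ there; what survives shows that \emph{both} sides equal $-\int_\fO \ii\partial\phi\wedge\bar\partial\psi\wedge\Theta$, which is the desired symmetry.

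With closedness in hand, the functional $J_v$ is well defined, and evaluating along the straight segment $u_t=v+t(u-v)$ together with the binomial expansion of $(\ii\ddb u_t)^n$ yields the explicit formula
\[ J_v(u)=-\int_\fO (u-v)\sum_{k=1}^{n}\frac{1}{k+1}\binom{n}{k}\big(\ii\ddb(u-v)\big)^k\wedge(\ii\ddb v)^{n-k}, \]
whose first variation one may also verify directly reproduces \eqref{eq:37}. The main obstacle I anticipate lies entirely in the integration-by-parts step: I must ensure that no boundary contribution survives (guaranteed by $\fd u|_{\di\fO}=0$) and that the manipulations are legitimate at the $\cC^2$ regularity of $\cP$, where $(\ii\ddb u)^{n-1}$ is only a continuous form. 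I would handle the latter by approximating $u,\phi,\psi$ in $\cC^2$ by smooth functions, for which the formula is classical Stokes, and passing to the limit using continuity of the mixed Monge--Amp\`ere expressions in the sense of Bedford--Taylor~\cite{BedfordTaylor1976}; this is precisely the ingredient that also underlies the well-definedness of $F^0$ asserted after \eqref{eq:6}.
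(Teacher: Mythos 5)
Your proposal is correct and follows essentially the same route as the paper: both define $J_v$ as the integral of the prescribed $1$-form along a path from $v$ to $u$, and both justify path-independence by the identical key computation, namely the symmetric integration by parts $\int_{\fO} \phi\, \ii\ddb \psi \wedge (\ii\ddb u)^{n-1} = \int_{\fO} \psi\, \ii\ddb \phi \wedge (\ii\ddb u)^{n-1}$ for variations vanishing on $\di\fO$. The only difference is organizational: you phrase path-independence as closedness of the $1$-form plus convexity of $\cP(\fO,\vff)$ (which incidentally firms up the paper's unproved assertion that $\cP$ is connected, and supplies the simple-connectivity the homotopy argument implicitly needs), whereas the paper differentiates the path integral directly under a fixed-endpoint deformation --- the same computation in different packaging.
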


\begin{proof}
  Notice that $\cP$ is connected, so we can connect $v$ to $u\in \cP$
  by a path $u_t, 0 \leq t \leq 1$ such that $u_0=v$ and $u_1=u$.
  Define
  \begin{equation}
    J_v(u) = - \int_0^1 \int_{\fO} \dod{u_t}{t} \big( (\ii \ddb u_t)^n 
    - (\ii \ddb v)^n \big) \,dt.
    \label{eq:38}
  \end{equation}
  We need to show that the integral in (\ref{eq:38}) is independent of
  the choice of path $u_t$.  Let $\fd u_t = w_t $ be a variation of
  the path. Then
  $$ w_1 = w_0 = 0 \qquad \text{ and } \qquad w_t=0 \quad \text{on }
  \di\fO, $$
  and 
  \begin{equation*}
    \begin{split}
      & \qquad  \fd \int_0^1 \int_{\fO} \dot{u} \, \big( (\ii \ddb
      u)^n - (\ii \ddb v)^n  \big) \, dt\\
      &= \int_0^1 \int_{\fO} \Bl \dot{w} \bl (\ii \ddb
      u)^n - (\ii \ddb v)^n \br +  \dot{u}\, n  \ii \ddb w (\ii\ddb u)^{n-1}
      \Br \,dt ,
    \end{split}
  \end{equation*}
  Since $w_0=w_1=0$, an integration by part with respect to $t$ gives
  \begin{equation*}
    \begin{split}
      & \qquad  \int_0^1 \int_{\fO} \dot{w} \bl (\ii \ddb
      u)^n - (\ii \ddb v)^n \br \, dt \\ 
      &= - \int_0^1 \int_{\fO}  w \frac{d}{dt}(\ii \ddb u)^n \, dt
      = - \int_0^1 \int_{\fO} \ii n  w  \ddb \dot{u} (\ii\ddb u)^{n-1}\,  dt.
    \end{split}
  \end{equation*}
  Notice that both $w$ and $\dot{u}$ vanish on $\di\fO$, so an
  integration by part with respect to $z$ gives
  \begin{equation*}
    \begin{split}
     \int_{\fO} \ii n  w  \ddb \dot{u} (\ii\ddb u)^{n-1} &= -
     \int_{\fO} \ii n  \di w \wedge \dib \dot{u} (\ii\ddb u)^{n-1} \\
     &= \int_{\fO} \ii n  \dot{u} \ddb w (\ii\ddb u)^{n-1} .
   \end{split}
  \end{equation*}
  So
  \begin{equation}
      \fd \int_0^1 \int_{\fO} \dot{u} \, \big( (\ii \ddb u)^n - 
      (\ii \ddb v)^n  \big) \, dt = 0, 
      \label{eq:39}
  \end{equation}
  and the functional $J$ is well defined.
\end{proof}

Using the $J$ functional, we can define the $F^0$ functional as
\begin{equation}
  F_v^0(u) = J_v(u) - \int_\fO u (\ii\ddb v)^n.
  \label{eq:40}
\end{equation}
Then by Proposition~\ref{thm:4}, we have
\begin{equation}
  \fd F_v^0(u) = - \int_{\fO} \fd u (\ii\ddb u)^n.
  \label{eq:41}
\end{equation}

\begin{prop} The basic properties of $I, J$ and $F^0$ are following:
  \begin{enumerate}
    \item For any $u \in \cP(\fO, \vff)$, $I_v(u) \geq J_v(u) \geq 0.$
    \item $F^0$ is convex on $\cP(\fO,\vff)$, i.e. $\forall\, u_0,
      u_1 \in \cP$, 
      \begin{equation}
        F^0\bl \frac{u_0+u_1}{2} \br \leq \frac{F^0(u_0) +
        F^0(u_1)}{2}.
        \label{eq:42}
      \end{equation}
    \item $F^0$ satisfies the \emph{cocycle condition}, i.e.
      $\forall\, u_1, u_2, u_3 \in \cP(\fO, \vff)$, 
      \begin{equation}
        F_{u_1}^0(u_2) + F_{u_2}^0(u_3) = F_{u_1}^0(u_3).
      \end{equation}
  \end{enumerate}
\end{prop}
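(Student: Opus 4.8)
The plan is to reduce all three properties to a single integration-by-parts identity together with monotonicity/convexity along \emph{linear paths} $u_s=(1-s)u_0+su_1$, which stay inside $\cP(\fO,\vff)$ because a convex combination of psh functions sharing the same boundary data is again psh with that boundary data. Write $\omega_u=\ii\ddb u$. The computation I will invoke repeatedly is: if $\phi=0$ on $\di\fO$ and $\Theta$ is a closed real $(n-1,n-1)$-form, then Stokes' theorem (with no boundary contribution, since $\phi$ vanishes on $\di\fO$) gives
\begin{equation*}
  -\int_\fO \phi\,\ii\ddb\phi\wedge\Theta=\int_\fO \ii\di\phi\wedge\dib\phi\wedge\Theta,
\end{equation*}
whose right-hand side is $\ge 0$ as soon as $\Theta\ge 0$, because $\ii\di\phi\wedge\dib\phi\ge 0$. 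The essential point — and the reason linear paths are forced on us — is that $\ii\ddb(u-v)$ is \emph{not} sign-definite, so I must always arrange the remaining factor $\Theta$ to be a product of the genuinely positive forms $\omega_{u_s}$ and $\omega_v$.

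For property (1), first I would prove $J_v(u)\ge 0$. Take $u_t=(1-t)v+tu$, so $w:=u-v$ vanishes on $\di\fO$ and $\dot u_t=w$, and use the telescoping factorization $\omega_{u_t}^n-\omega_v^n=t\,\ii\ddb w\wedge\Theta_t$ with $\Theta_t=\sum_{j=0}^{n-1}\omega_{u_t}^{\,j}\wedge\omega_v^{\,n-1-j}\ge 0$. Then (\ref{eq:38}) and the identity above give
\begin{equation*}
  J_v(u)=\int_0^1 t\sum_{j=0}^{n-1}\int_\fO \ii\di w\wedge\dib w\wedge\omega_{u_t}^{\,j}\wedge\omega_v^{\,n-1-j}\,dt\ge 0.
\end{equation*}
For the comparison $I_v(u)\ge J_v(u)$ I would set $g(s)=I_v(u_s)-J_v(u_s)$ on the same segment; then $g(0)=0$, and differentiating (using (\ref{eq:37}) for the $J$-part and the product rule on the Monge--Amp\`ere factor) and integrating by parts once reduces the derivative to
\begin{equation*}
  g'(s)=n\,s\int_\fO \ii\di w\wedge\dib w\wedge\omega_{u_s}^{\,n-1}\ge 0,
\end{equation*}
whence $g(1)=I_v(u)-J_v(u)\ge 0$.

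Property (2) is the same mechanism applied to $F^0$, and is the cleanest of the three. Fix $u_0,u_1\in\cP$, put $\eta=u_1-u_0$ (which vanishes on $\di\fO$) and $\Phi(s)=F^0(u_s)$. By the variation formula (\ref{eq:41}) we get $\Phi'(s)=-\int_\fO \eta\,\omega_{u_s}^n$, and differentiating once more and integrating by parts yields $\Phi''(s)=n\int_\fO \ii\di\eta\wedge\dib\eta\wedge\omega_{u_s}^{\,n-1}\ge 0$. Hence $\Phi$ is convex on $[0,1]$, and $\Phi(\tfrac12)\le\tfrac12\bigl(\Phi(0)+\Phi(1)\bigr)$ is exactly (\ref{eq:42}).

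The delicate one is the cocycle (3), which I would deduce from the fact — already visible in (\ref{eq:41}) — that $\fd F_v^0(u)=-\int_\fO\fd u\,\omega_u^n$ does \emph{not} depend on the reference point $v$. Consequently, for any $v,v'$ the functional $u\mapsto F_v^0(u)-F_{v'}^0(u)$ has vanishing differential on the connected set $\cP$, so it equals a constant $\kappa(v,v')$ independent of $u$; evaluating at $u=v'$ and using the normalization $F_{v'}^0(v')=0$ gives $\kappa(v,v')=F_v^0(v')$. Taking $(v,v')=(u_1,u_2)$, the identity $F_{u_1}^0(u)-F_{u_2}^0(u)=F_{u_1}^0(u_2)$ holds for all $u$, and evaluating it at $u=u_3$ rearranges into $F_{u_1}^0(u_2)+F_{u_2}^0(u_3)=F_{u_1}^0(u_3)$. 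I expect the main obstacle to be bookkeeping the normalization: the variation formula pins down $F^0$ only up to a $v$-dependent additive constant, and the cocycle holds precisely for the choice $F_v^0(v)=0$ (equivalently, the constant in (\ref{eq:40}) must be fixed so that the purely $v$-dependent terms telescope). The same additivity can be checked concretely by splitting the path integral in (\ref{eq:38}) through the intermediate potential, which yields $J_{u_1}(u_2)+J_{u_2}(u_3)-J_{u_1}(u_3)=\int_\fO(u_3-u_2)\bigl(\omega_{u_2}^n-\omega_{u_1}^n\bigr)$; substituting this into (\ref{eq:40}) and verifying the cancellation of the linear terms gives an independent confirmation.
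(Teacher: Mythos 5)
Your proposal is correct, and in two of the three parts it takes a genuinely different route from the paper. For (1), the paper uses the same linear segment $u_s=(1-s)v+su$, $w=u-v$, but computes \emph{both} functionals as explicit double integrals: it derives $I_v(u)=\int_0^1\!\int_\fO n\,\ii\,\di w\wedge\dib w\wedge(\ii\ddb u_s)^{n-1}\,ds$ in (\ref{eq:43}) and the identical expression with an extra weight $(1-s)$ for $J_v(u)$ in (\ref{eq:44}), so that $I\ge J\ge 0$ is read off by comparing the weights $1\ge 1-s\ge 0$. Your telescoping factorization plus the derivative computation $g'(s)\ge 0$ reaches the same conclusion from the same positivity input; the paper's version has the mild advantage of exhibiting closed formulas for both quantities at once, yours of avoiding the double time integral. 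For (2) the mechanisms genuinely differ: the paper proves only the midpoint inequality, pairing $t$ with $t+1/2$ along the chord and invoking the monotonicity $\int_\fO(a-b)\big((\ii\ddb b)^n-(\ii\ddb a)^n\big)\ge 0$, whereas your computation $\Phi''(s)=n\int_\fO\ii\,\di\eta\wedge\dib\eta\wedge(\ii\ddb u_s)^{n-1}\ge 0$ is shorter and buys strictly more, namely convexity along every chord, not just at the midpoint. For (3) the paper offers nothing beyond the sentence that the cocycle is a simple consequence of (\ref{eq:41}), so your argument --- the differential of $F^0_v$ is independent of $v$, hence $F^0_{u_1}-F^0_{u_2}$ is constant on the connected set $\cP(\fO,\vff)$, and the constant is identified by evaluation at $u_2$ --- supplies exactly the content the paper omits.

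Your ``normalization worry'' in (3) is not mere bookkeeping; it is a genuine erratum in the paper, and you resolve it correctly. With (\ref{eq:40}) as printed, $F^0_v(v)=J_v(v)-\int_\fO v\,(\ii\ddb v)^n=-\int_\fO v\,(\ii\ddb v)^n\neq 0$ in general, and your splitting identity for $J$ shows the cocycle then fails by exactly $-\int_\fO u_2\,(\ii\ddb u_2)^n$. The definition must read $F^0_v(u)=J_v(u)-\int_\fO(u-v)(\ii\ddb v)^n$, which restores $F^0_v(v)=0$, leaves the variation formula (\ref{eq:41}) (and hence all of Section 4, where $v$ is fixed) unchanged, and makes your constancy argument close. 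The same proofreading applies to (\ref{eq:36}): statement (1), and your formula for $g'(s)$, require $I_v(u)=-\int_\fO(u-v)\big((\ii\ddb u)^n-(\ii\ddb v)^n\big)$, which is what the paper actually uses in (\ref{eq:43}); with the literal (\ref{eq:36}) a term $-\int_\fO w\,(\ii\ddb v)^n$ of indefinite sign would survive in $g'(s)$, and indeed $I\ge J$ would be false (take $u\ge v$ in $\cP$).
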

\begin{proof}
  Let $w = (u-v)$ and $u_t = v+ tw = (1-t)v + tu$, then
  \begin{equation}
    \begin{split}
      I_v(u) &= - \int_{\fO} w \bl (\ii\ddb u)^n - (\ii \ddb v)^n \br \\
      &= - \int_{\fO} w \bl \int_0^1 \frac{d}{dt} (\ii \ddb u_t)^n \,dt \br \\
      &= - \int_0^1 \int_{\fO} \ii\, n  w  \ddb w (\ii \ddb u_t)^{n-1} \\
      &= \int_0^1 \int_{\fO}  \ii\, n  \di w \wedge \dib w \wedge
      (\ii \ddb u_t)^{n-1} 
       \geq 0,
    \end{split}
    \label{eq:43}
  \end{equation}
  and 
  \begin{equation}
    \begin{split}
      J_v(u) &= - \int_0^1 \int_{\fO} w \bl (\ii\ddb u_t)^n - (\ii \ddb v)^n 
        \br \, dt \\ 
      &=- \int_0^1 \int_{\fO} w \bl \int_0^t \frac{d}{ds} 
      (\ii \ddb u_s)^n \, ds \br \, dt  \\ 
      &=- \int_0^1 \int_{\fO} \int_0^t \ii\, n w  \ddb w (\ii \ddb
      u_s)^{n-1} \, ds\, dt \\
      &= \int_0^1 \int_{\fO} (1-s) \ii\, n \di w \wedge  \dib w
      \wedge (\ii \ddb u_s)^{n-1} \,ds \geq 0. 
    \end{split}
    \label{eq:44}
  \end{equation}
  Compare (\ref{eq:43}) and (\ref{eq:44}), it is easy to see that 
  $$ I_v(u) \geq J_v(u) \geq 0 . $$
  To prove (\ref{eq:42}), let $u_t = (1-t) u_0 + t u_1$, then
  \begin{equation*}
    \begin{split}
      F^0(u_{1/2}) - F^0(u_0) &=- \int_0^{\frac{1}{2}} \int_{\fO} 
      ( u_1 - u_0 )\,(\ii \ddb  u_t )^n\,dt, \\
      F^0(u_1) - F^0(u_{1/2}) &=- \int_{\frac{1}{2}}^1 \int_{\fO} 
      ( u_1 - u_0 )\,(\ii \ddb  u_t )^n\,dt. \\
    \end{split}
  \end{equation*}
  Since
  \begin{equation*}
    \begin{split}
        &  \int_0^{\frac{1}{2}} \int_{\fO} ( u_1 - u_0 ) \, (\ii \ddb  u_t )^n
       \,dt - \int_{\frac{1}{2}}^1 \int_{\fO} 
      ( u_1 - u_0 )\, ( \ii \ddb u_t )^n\,dt. \\
        = &  \int_0^{\frac{1}{2}} \int_{\fO} ( u_1 - u_0 ) \bl 
        (\ii \ddb u_t )^n - (\ii \ddb u_{t+1/2} )^n \br \,dt \\
        = & 2 \int_0^{\frac{1}{2}} \int_{\fO} (u_{t+ 1/2} -
        u_t) \bl (\ii \ddb u_t)^n - (\ii \ddb  u_{t+1/2})^n \br \,dt \geq 0 .
    \end{split}
  \end{equation*}
  So
  \begin{equation*}
    F^0(u_1) - F^0(u_{1/2}) \geq  F^0(u_{1/2}) - F^0(u_0).
  \end{equation*}
  The cocycle condition is a simple consequence of the variation
  formula \ref{eq:41}.
\end{proof}

\section{The Convergence}

In this section, let us assume that both $f$ and $\vff$ are independent of $t$.
For $u \in \cP(\fO, \vff)$, define
\begin{equation}
  F(u) = F^0(u) + \int_{\fO} G(z,u) dV,
  \label{eq:45}
\end{equation}
where $dV$ is the volume element in $\CC^n$, and $G(z,s)$ is the
function given by
$$ G(z,s) = \int_0^s e^{-f(z,t)}\,dt. $$
Then the variation of $F$ is 
\begin{equation}
  \fd F(u) = - \int_{\fO} \fd u \bl \det ( u_{\fa\bar{\fb}}) -
  e^{-f(z,u)} \br\, dV.
  \label{eq:46}
\end{equation}

\noindent
\emph{Proof of Theorem~\ref{thm:2}.} We will follow Phong and
Sturm's proof of the convergence of the \KH-Ricci flow in
\cite{PhongSturm2006}. For any $t >0$, the function $u(\cdot, t)$ is in
$\cP(\fO,\vff)$. So by (\ref{eq:46})  
\begin{equation*}
  \begin{split}
    \frac{d\,}{dt} F(u) &= - \int_{\fO} \dot{u} \bl \det(
    u_{\fa\bar{\fb}}) - e^{-f(z,u)} \br \\
    & = - \int_{\fO} \bl \ld(u_{\fa\bar{\fb}}) - (- f(z,u)) \br \bl
    \det(u_{\fa\bar{\fb}}) - e^{-f(z,u)} \br \leq 0.
  \end{split}
\end{equation*}
Thus $F(u(\cdot,t))$ is monotonic decreasing as $t$ approaches
$+\infty$. On the other hand, $u(\cdot,t)$ is uniformly bounded in
$\cC^{2}(\lbar{\fO})$ by (\ref{eq:10}), so both $F^0(u(\cdot,t))$ 
and $f(z,u(\cdot,t))$ are uniformly bounded, hence $F(u)$ is bounded.
Therefore
\begin{equation}
  \int_0^{\infty} \int_{\fO} \bl \ld (u_{\fa\bar{\fb}}) + f(z, u) \br 
   \bl \det(u_{\fa\bar{\fb}}) - e^{-f(z,u)} \br \,dt < \infty.
  \label{eq:47}
\end{equation}
Observed that by the Mean Value Theorem, for $x, y \in \RR$, 
$$ (x+y)(e^x - e^{-y}) = (x+y)^2 e^{\eta} \geq e^{\min(x, -y)} (x-y)^2, $$
where $\eta$ is between $x$ and $-y$. Thus
$$ \bl \ld (u_{\fa\bar{\fb}}) + f \br 
\bl \det(u_{\fa\bar{\fb}}) - e^{-f} \br \geq C_9 \bl 
\ld(u_{\fa\bar{\fb}}) + f\br^2 = C_9 |\dot{u}|^2 $$
where $C_9$ is independent of $t$. Hence
\begin{equation}
  \int_0^{\infty} \llnorm{\dot{u}}^2 \,dt \leq \infty 
  \label{eq:48}
\end{equation}
Let
\begin{equation}
  Y(t) = \int_{\fO} |\dot{u}(\cdot,t)|^2 \, \det (u_{\fa\bar{\fb}}) \,dV, 
  \label{eq:49}
\end{equation}
then
$$  \dot{Y} = \int_{\fO} \bl 2 \ddot{u} \dot{u}  +
    \dot{u}^2 u^{\fa\bar{\fb}} \dot{u}_{\fa\bar{\fb}} \br
    \det(u_{\fa\bar{\fb}})\,dV. $$ 
Differentiate (\ref{eq:1}) in $t$, 
\begin{equation}
 \ddot{u} - u^{\fa\bar{\fb}} \dot{u}_{\fa\bar{\fb}} = f_u \dot{u}, 
 \label{eq:50}
\end{equation}
so
\begin{equation*}
  \begin{split}
  \dot{Y} &= \int_{\fO} \bl 2 \dot{u} \dot{u}_{\fa\bar{\fb}}
   u^{\fa\bar{\fb}} + \dot{u}^2 \big( 2 f_u + \ddot{u} 
   - f_u \dot{u} \big) \br \det(u_{\fa\bar{\fb}})\,dV \\
   &= \int_{\fO} \bl  \dot{u}^2 \big( 2 f_u + \ddot{u} 
   - f_u \dot{u} \big) - 2 \dot{u}_{\fa} \dot{u}_{\bar{\fb}}
   u^{\fa\bar{\fb}} \br \det(u_{\fa\bar{\fb}})\,dV 
   \end{split}
\end{equation*}
From (\ref{eq:50}), we get
$$ \dddot{u} - u^{\fa\bar{\fb}} \ddot{u}_{\fa\bar{\fb}} - f_u
\ddot{u} \leq f_{uu} \dot{u}^2 $$ 
Since $f_u \leq 0 $ and $f_{uu} \leq 0$, so $\ddot{u}$ is bounded
from above by the maximum principle. Therefore
$$ \dot{Y} \leq C_{10} \int_{\fO} \dot{u}^2 \det(u_{\fa\bar{\fb}})\,dV 
= C_{10}Y, $$
and
\begin{equation}
 Y(t) \leq Y(s) e^{C_{10}(t-s)} \qquad \text{for } t > s, 
 \label{eq:51}
\end{equation}
where $C_{10}$ is independent of $t$. By (\ref{eq:48}), 
(\ref{eq:51}) and the uniform boundedness of $\det(u_{\fa\bar{\fb}})$,
we get
$$ \lim_{t\to \infty} \llnorm{u(\cdot, t)} = 0. $$
Since $\fO$ is bounded, the $L^2$ norm controls the $L^1$ norm, hence 
$$ \lim_{t\to \infty} \lnorm{u(\cdot, t)}=0. $$
Notice that by the Mean Value Theorem,
$$ | e^x - 1 | < e^{|x|} |x| $$
so
$$ \int_{\fO} |e^{\dot{u}} - 1 |\,dV \leq e^{\sup|\dot{u}|} \int_{\fO}
|\dot{u}|\,dV $$
Hence $e^{\dot{u}}$ converges to $1$ in $L^1(\fO)$ as $t$ approaches
$+\infty$. Now $u(\cdot,t)$ is bounded in $\cC^2(\lbar{\fO})$, so
$u(\cdot, t)$ converges to a unique function $\tilde{u}$, at least
sequentially in $\cC^1(\lbar{\fO})$, hence $f(z,u) \to f(z,\tilde{u})$
and 
$$ \det(\tilde{u}_{\fa\bar{\fb}}) = \lim_{t\to \infty} 
\det(u(\cdot,t)_{\fa\bar{\fb}}) = \lim_{t\to \infty} e^{\dot{u} -
f(z,u)} = e^{-f(z,\tilde{u})}, $$
i.e. $\tilde{u}$ solves (\ref{eq:8}). 

\qed

\bibliographystyle{alpha}
\bibliography{cpma}

\end{document}